\let\proof\@undefined                        
\let\endproof\@undefined                  
\newtheorem{theorem}{Theorem}
\newtheorem{definition}{Definition}
\newtheorem{lemma}{Lemma}
\newtheorem{problem}{Problem}
\definecolor{officegreen}{rgb}{0.0, 0.5, 0.0}
\newcommand{\hasn}[1]{{\color{black} #1}}
\newcommand{\moha}[1]{{\color{black} #1}}
\newcommand{\shen}[1]{{\color{black} #1}}
\newcommand{\yongn}[1]{{\color{black} #1}}
\title{\LARGE \bf
Incremental Affine Abstraction of Nonlinear Systems
}
\author{Syed M. Hassaan, Mohammad Khajenejad, Spencer Jensen, Qiang Shen and Sze Zheng Yong
\thanks{S.M. Hassaan, M. Khajenejad, S. Jensen and S.Z. Yong are with School for Engineering of Matter, Transport and Energy, 
Arizona State University, Tempe, AZ, USA; 
Q. Shen is with the School of Aeronautics and Astronautics, Shanghai Jiao Tong University, Shanghai, P.R. China
(email: {\tt \{shassaan,mkhajene,sjensen8,szyong\}@asu.edu, qiangshen@sjtu.edu.cn}). 
This work was supported in part by DARPA grant D18AP00073. We acknowledge Research Computing at Arizona State University (\protect\url{http://www.researchcomputing.asu.edu}) for providing High Performance Computing resources that have contributed to the results reported within this paper. }
}
\begin{document}

\maketitle

\begin{abstract}
In this paper, we propose an incremental abstraction method for dynamically over-approximating nonlinear systems in a bounded domain  by solving a sequence of linear programs, resulting in a sequence of affine upper and lower hyperplanes with expanding operating regions. \yongn{Although the affine abstraction problem can be solved offline using a single linear program, existing approaches suffer from a computation space complexity that grows exponentially with the state dimension. Hence,} the motivation for incremental abstraction is to reduce the space complexity for high-dimensional systems, but at the cost of yielding potentially worse abstractions/over-approximations.
Specifically, we start with an operating region that is a subregion of the state space and compute two affine hyperplanes that bracket the nonlinear function locally. Then, by incrementally expanding the operating region, we dynamically update the two affine hyperplanes such that we eventually yield hyperplanes that are guaranteed to over-approximate the nonlinear system over the entire domain. 
Finally, the effectiveness of the \yongn{proposed approach} 
is demonstrated using numerical examples \yongn{of high-dimensional nonlinear systems}. 

\end{abstract}


\section{Introduction}
One of the main challenges in the area of formal verification and synthesis of complex control systems is the exponential complexity of the algorithms, thus various abstraction-based methods have been proposed for complexity reduction, e.g.,  \cite{pappas2002,Tabuada2009}. The abstraction procedure computes a simpler but over-approximated system that includes all possible behaviors of the original system while preserving properties of interest. For instance, to verify that a given complex system satisfies certain properties, we can test for the desired property on the abstracted simple system, and the test result is equivalent to or sufficient for testing for the property on the original complex system. 


\emph{Literature Review.} In general, abstraction is a systematic approximation method that partitions the state space/vector field of a complex system into finite subregions, and then approximates its dynamics in each subregion by a simpler one, resulting in a hybrid system \cite{asarin2007hybridization,Asarin2003}.  \moha{Multiple abstraction approaches have been developed for several classes of systems in the literature}, including nonlinear systems \cite{Girard2012, singh2018mesh, Singh2018}, hybrid systems \cite{Alimguzhin2017}, and uncertain affine and nonlinear systems \cite{shen2019,Jin2019CDC}. \yongn{A common abstraction method uses \emph{symbolic} approaches, e.g.,  \cite{coogan2015efficient,pola2008approximately,reissig2016feedback, 
zamani2014symbolic}, based on 
discretization of the state and input spaces to obtain dynamical abstraction systems with \emph{finitely} many number of states and inputs, 
which symbolizes sets of states and inputs of the original system.  
However, the number of symbolic states and inputs 
typically grows 
exponentially with state and input dimensions.} 

\yongn{On the other hand, the work} in \cite{Girard2012} \moha{considers the over-approximation of} nonlinear \moha{vector fields with} affine systems, where the approximation error is accounted for with an additive disturbance. Further, based on a partition technique using Lebesgue integrals and 
sampling, a piecewise affine abstraction and its corresponding approximation error bounds are obtained in \cite{Azuma2010} to approximate a class of nonlinear systems with specified accuracy and relatively few 
subregions. 

In contrast to \cite{Girard2012, Azuma2010}, where a single simpler function with a bounded error term is used to abstract the original system dynamics, 
recent works \moha{in} \cite{singh2018mesh,Alimguzhin2017,shen2019,Jin2019CDC} employ 
\moha{upper and lower} affine functions to sandwich/bracket the original system dynamics, in the sense of inclusion of all possible behaviors for each subregion. In particular, \moha{the authors of} \cite{Alimguzhin2017} \moha{proposed} 
an affine abstraction approach for nonlinear Lipschitz continuous functions, resulting in two affine hyperplanes, as upper and lower bounds to bracket the original system dynamics, while 
in \cite{singh2018mesh}, two piecewise affine functions \moha{were derived} by solving a linear program for \moha{each} bounded subregion of the state space  to over-approximate 
nonlinear systems with different degrees of smoothness. 
However, \yongn{although these abstraction methods can be solved offline for each subregion using a single linear program, they have scalability issues when the original system is a high-dimensional system since the computation\moha{al} complexity grows exponentially with the state dimension}.

\emph{Contributions.} In this paper, an incremental abstraction method is proposed to dynamically over-approximate nonlinear systems \yongn{to overcome the issue of space complexity. Specifically, we propose a novel method to carry out the abstraction process sequentially, starting with a small operating region that is a subset of the entire domain and incrementally expanding to larger domains by adding new grid points, until all grid points are added. At each increment, 
a local abstraction consisting of two affine hyperplanes can be obtained by solving a linear program. 
This is in contrast to the conventional mesh-based abstraction methods, e.g., in \cite{singh2018mesh,Alimguzhin2017}, that construct abstractions statically over all grid points in the interior of the domain of interest  and have the aforementioned space complexity issues. 

Moreover, by design, our proposed incremental abstraction approach has reduced space complexity when compared to \cite{singh2018mesh,Alimguzhin2017}. The reason is that  our approach only considers the boundary points of the previous region and the newly added grid points for computing the local abstraction at each increment. More importantly, our approach provides us control over the amount of memory that is allocated to solve each linear program, and we have a rigorous proof that guarantees that the incremental abstraction is indeed an over-approximation/abstraction of the original system, which is an important feature when used for reachability analysis and robust control synthesis. 
The simulation results demonstrate that the proposed incremental approach is able to abstract high-dimensional nonlinear systems with limited space resources, but at the cost of obtaining a worse over-approximation \shen{and} a longer total computation time due to the sequence of linear programs that need to be solved. Note, however, that the time complexity is less of a concern, since the resulting linear programs are solved offline.

}

\section{Preliminaries}



For a vector $v \in \mathbb{R}^n$ and a matrix $M \in \mathbb{R}^{p \times q}$, $\|v\|_i$ and $\|M\|_i$ denote their  (induced) $i$-norm with $i=\{1,2,\infty\}$.

\subsection{Modeling Framework and Definitions} \label{sec:model}

Consider the nonlinear system:
\begin{align} \label{eq:nonl_sys}
	{x}^+ = f(x,u),
\end{align}
where $x \in \mathcal{X}=[\underline{x},\overline{x}]^n \subseteq \mathbb{R}^n$ is the system state with a bounded and closed interval domain $\mathcal{X}$, $u \in \mathcal{U}= [\underline{u},\overline{u}]^m \subseteq \mathbb{R}^m$ is the known control input with a bounded and closed interval domain $\mathcal{U}$ and vector field $f : \mathcal{X} \times \mathcal{U} \to \mathbb{R}^n$ is a continuous function. For discrete-time systems, $x^+$ denotes the state at the next time instant while for continuous-time systems, $x^+ =\dot{x}$ is the time derivative of the state. We denote $(x,u) \in \mathbb{R}^{n+m}$ a \emph{sample point} throughout the paper.

To incrementally abstract the nonlinear system \eqref{eq:nonl_sys}, we introduce the following definitions for each increment $k \in \mathbb{N}$.

\begin{definition}[Uniform Mesh and Grid Points]\label{def:grid} 
A uniform mesh of each domain $ \mathcal{X} \times \mathcal{U}$ is a collection of  $s_{\max}$ number of points, called grid points, uniformly distributed along all directions and dimensions. The set of grid points is denoted as $\mathcal{M}$ and by construction, the convex hull of $\mathcal{M}$ is the entire domain $ \mathcal{X} \times \mathcal{U}$, i.e., $\mathcal{X} \times \mathcal{U} =Conv (\mathcal{M})$. 
\end{definition}

\begin{definition}[Diameter] \label{diameter} 
The \textit{diameter} $\delta$ of each mesh element in a uniform mesh is the greatest distance between two vertices of the mesh element.
\end{definition}

\begin{definition}[Sample Set and  Operating Region] \label{def:sam_set}
At any increment $k$, a set $\mathcal{S}_k$ is called a sample set if it is a subset of all the existing grid points. Moreover, all grid points in the convex hull of the sample set is called the operating region and is denoted by $\mathcal{R}_k$, i.e., $\mathcal{R}_k \triangleq Conv(\mathcal{S}_k) \cap \mathcal{M}$. 
\end{definition}
\begin{definition}[Expanding Operation Region] \label{def:exp_dom}
	At each increment $k$, the operation region $\mathcal{R}_k$ is expanding if $\mathcal{R}_{k-1} \subset \mathcal{R}_{k}$, i.e., the new operating region at the current increment is a strict superset of the previous operating region. 
\end{definition}
\begin{definition}[Vertex Set] \label{def:bndry_pts}
	Given an operating region $\mathcal{R}_k$ at increment $k$, the set of all vertices of the convex hull of $\mathcal{R}_k$ is called the vertex set, 
	and denoted as $\mathcal{V}_k \triangleq Ver(Conv(\mathcal{R}_k))$. Note that the convex hull of the operating region is a polytope and has a well-defined vertex set. 
\end{definition}\vspace{-0.05cm}

The process of over-approximating a nonlinear function as given in (\ref{eq:nonl_sys}) can be defined as follows, \yongn{similar} to \cite{singh2018mesh}: 
\begin{definition}[Affine Abstraction Model] 
\label{def:aff_abstr}
	Given a bounded-domain function $f(x,u)$, the affine functions $\overline{f}(x,u) = \overline{A} x + \overline{B} u + \overline{h}$ and $\underline{f}(x,u) = \underline{A} x + \underline{B} u + \underline{h}$, are called upper and lower \yongn{affine functions of} $f(x,u)$, respectively, if $\forall (x,u) \in \mathcal{X} \times \mathcal{U}$, $\underline{f}(x,u) \leq f(x,u) \leq \overline{f}(x,u)$. 
The pair of functions $\mathcal{F} \triangleq \{\overline{f}(x,u), \underline{f}(x,u)\}$ forms an affine abstraction model that over-approximates the given function $f(x,u)$.
\end{definition}\vspace{-0.05cm}

 One major goal when finding affine abstractions is to get them as tight as possible with \yongn{a low abstraction error, i.e., with a small} distance between the affine hyperplanes: 
\begin{definition}[Abstraction Error \cite{singh2018mesh}]\label{def:abstr_err}
	The abstraction error of an affine abstraction model $\mathcal{F}$ of a nonlinear function $f(x,u)$ over its domain $\mathcal{X} \times \mathcal{U}$, at increment $k$, is defined as $\theta = \max_{(x,u) \in \mathcal{X} \times \mathcal{U}} ||\overline{f}(x,u) - \underline{f}(x,u)||_1$. 
\end{definition}

\shen{
\yongn{Next, we reproduce a lemma from \cite{Stampfle2000} that we will rely on to find linear interpolation error bounds over mesh elements:}
\begin{lemma}[\hspace{-0.02cm}{\cite[Theorem 4.1  \& Lemma 4.3]{Stampfle2000}}] \label{lemma_accuracy}
Let $S$ be an $(n+m)$-dimensional mesh element such that $S \subseteq \mathcal{M} \subseteq \mathbb{R}^{n+m}$ 
with  diameter $\delta$ {(see Definition \ref{diameter})}. Let $f:S \rightarrow \mathbb{R}$ be a nonlinear function and let $f_{l}$ be the linear interpolation of 
$f(.)$ \yongn{evaluated} at the vertices of the mesh element $S$. Then, the approximation error bound $\sigma$ defined as the maximum error between $f$ and $f_{l}$ on $S$, i.e., 
$\sigma = \max_{s \, \in S} (|f(s) - f_{l}(s)|)$, 
is upper-bounded by 
\begin{enumerate}[(i)]
\item $\sigma \leq 2\lambda \delta_{s}$, if $f \in C^{0}$ on $S$,
\item $\sigma \leq \lambda \delta_{s}$, if $f$ is Lipschitz continuous on $S$,
\item $\sigma \leq \delta_{s} \max_{s\in S}\|f'(s)\|_{2}$, if $f \in C^{1}$ on $S$,
\item $\sigma \leq \frac{1}{2} \delta_{s}^{2} \max_{s\in S} \|f''(s)\|_{2}$, if $f \in C^{2}$ on $S$,
\end{enumerate}
where $\lambda$ is the Lipschitz constant, $f'(s)$ is the Jacobian of $f(s)$, $f''(s)$ is the Hessian of $f(s)$ and $\delta_{s}$ satisfies
\begin{align*}
\delta_{s} \leq \sqrt{\frac{n+m}{2(n+m+1)}} \delta.
\end{align*}
\end{lemma}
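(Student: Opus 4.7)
The plan is to prove each regularity case by bounding the pointwise error $|f(s) - f_l(s)|$ for every $s \in S$ via a barycentric representation, and then to derive the Jung-type inequality $\delta_{s} \leq \sqrt{\tfrac{n+m}{2(n+m+1)}}\,\delta$ separately as a purely geometric fact. Throughout, I would let $\delta_s$ denote the circumradius of the smallest enclosing ball of $S$ and $c$ its center, so that $\|v_j - c\|_2 \leq \delta_s$ for every vertex $v_j$ of $S$ and also $\|s - c\|_2 \leq \delta_s$ for every $s \in S$.

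First, I would fix $s \in S$ and write $s = \sum_j \alpha_j v_j$ with $\alpha_j \geq 0$ and $\sum_j \alpha_j = 1$, where the $v_j$ are the vertices of the mesh element $S$. Since $f_l$ is the linear interpolant that matches $f$ at the vertices, $f_l(s) = \sum_j \alpha_j f(v_j)$, and hence
\begin{equation*}
f(s) - f_l(s) \;=\; \sum_j \alpha_j \bigl( f(s) - f(v_j) \bigr).
\end{equation*}
All four bounds will follow from this identity combined with an appropriate pointwise estimate for $|f(s) - f(v_j)|$ and a geometric inequality relating the $\alpha$-weighted moments $\sum_j \alpha_j \|v_j - s\|_2^r$ to $\delta_s^r$ for $r \in \{1,2\}$. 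The engine is the parallel-axis identity $\sum_j \alpha_j \|v_j - s\|_2^2 = \sum_j \alpha_j \|v_j - c\|_2^2 - \|s - c\|_2^2$, which follows by expanding the square and using $s = \sum_j \alpha_j v_j$, and which after bounding each vertex-to-center distance by $\delta_s$ yields $\sum_j \alpha_j \|v_j - s\|_2^2 \leq \delta_s^2$.

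For (ii), I would apply $|f(s) - f(v_j)| \leq \lambda \|v_j - s\|_2$ followed by Cauchy--Schwarz to obtain $\sum_j \alpha_j \|v_j - s\|_2 \leq \bigl(\sum_j \alpha_j \|v_j - s\|_2^2\bigr)^{1/2} \leq \delta_s$, giving $|f(s) - f_l(s)| \leq \lambda \delta_s$. The $C^1$ bound (iii) is identical with $\lambda$ replaced by $\max_{s \in S}\|f'(s)\|_2$, since the mean value theorem makes $f$ Lipschitz on $S$ with that constant. For (iv), I would perform a second-order Taylor expansion about $s$ itself, $f(v_j) = f(s) + f'(s)(v_j - s) + \tfrac{1}{2}(v_j - s)^\top f''(\xi_j)(v_j - s)$, multiply by $\alpha_j$ and sum; the first-order term vanishes because $\sum_j \alpha_j (v_j - s) = 0$, leaving $|f(s) - f_l(s)| \leq \tfrac{1}{2}\max_{s \in S}\|f''(s)\|_2 \sum_j \alpha_j \|v_j - s\|_2^2 \leq \tfrac{1}{2}\delta_s^2 \max_{s \in S}\|f''(s)\|_2$. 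For the $C^0$ bound (i), I would use triangle inequality routed through $c$: $|f(s) - f_l(s)| \leq |f(s) - f(c)| + \sum_j \alpha_j |f(c) - f(v_j)| \leq \lambda \|s - c\|_2 + \lambda \sum_j \alpha_j \|v_j - c\|_2 \leq 2\lambda \delta_s$, with $\lambda$ reinterpreted as the modulus of continuity of $f$ on $S$ at scale $\delta_s$.

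The final ingredient is the inequality $\delta_s \leq \sqrt{\tfrac{n+m}{2(n+m+1)}}\,\delta$, which is Jung's theorem in $\mathbb{R}^{n+m}$: any bounded set of diameter $\delta$ in a $d$-dimensional Euclidean space is contained in a closed ball of radius at most $\delta\sqrt{d/(2(d+1))}$. I would prove it by invoking Helly's theorem to reduce to at most $d+1$ extreme points of $S$ and then showing that the regular $d$-simplex saturates the inequality via an explicit computation. The main obstacle I expect is pinning down the constant exactly in (iv): the parallel-axis trick only delivers the factor $\tfrac{1}{2}$ if one anchors the Taylor expansion precisely at $s$ so that the first-order term is identically zero; anchoring at $c$ instead would leak a superfluous factor of $2$. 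A secondary subtlety is the $C^0$ case, where $\lambda$ has to be reinterpreted as a modulus of continuity rather than a Lipschitz constant, and the factor of $2$ is the unavoidable cost of routing through the circumcenter.
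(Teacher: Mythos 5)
The paper does not prove this lemma at all: it is imported verbatim from \cite[Theorem 4.1 \& Lemma 4.3]{Stampfle2000} and used as a black box in Theorem \ref{thm:incabs} and step 5 of Algorithm \ref{algorithm:incabs}, so there is no internal proof to compare against. Your reconstruction is sound and is, in substance, the standard argument (and essentially St\"ampfle's own): the barycentric error identity $f(s)-f_l(s)=\sum_j\alpha_j\bigl(f(s)-f(v_j)\bigr)$, the parallel-axis bound $\sum_j\alpha_j\|v_j-s\|_2^2\leq\delta_s^2$ with $\delta_s$ the circumradius of the smallest enclosing ball, Cauchy--Schwarz for the Lipschitz/$C^1$ cases, anchoring the Taylor expansion at $s$ so the first-order term cancels for the $C^2$ case, and Jung's theorem for the relation between $\delta_s$ and the diameter $\delta$. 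You correctly flag the two genuine subtleties: the factor $\tfrac12$ in (iv) really does require expanding about $s$ rather than $c$, and in (i) the symbol $\lambda$ cannot literally be ``the Lipschitz constant'' (a merely continuous function need not have one) and must be read as a modulus-of-continuity quantity, which is a looseness in the lemma as quoted rather than in your argument. Two small points you should make explicit: the barycentric representation with a well-defined linear interpolant requires the mesh element $S$ to be a simplex (which is the setting of \cite{Stampfle2000}, and consistent with $S$ being $(n+m)$-dimensional with $n+m+1$ vertices), and the $C^0$ argument that routes through the circumcenter $c$ needs $c\in S$ so that $f(c)$ is defined --- this holds because the center of the minimal enclosing ball of the vertex set lies in their convex hull. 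With those remarks added, your proof is complete and self-contained, which is arguably more than the paper offers for this statement.
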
\vspace{-0.1cm}
}

\section{Problem Formulation}

For the nonlinear function defined in \eqref{eq:nonl_sys}, previous works in  \cite{Alimguzhin2017, singh2018mesh} have proposed several different methods to find its affine abstraction. One major problem with these approaches is that they do not scale well with the number of grids. For systems where there are a very large number of grid points, which is usually the case with higher dimensional systems, the amount of memory required to store and process these points increases exponentially. Although reducing the number of grid points could solve the problem of memory consumption, it also results in poor/conservative abstractions or over-approximations. The following formalizes our notion of limited memory resources in this case. 
\begin{definition}[Maximum Number Of Points]
Limited memory resources can be expressed in terms of the limit on maximum number of points, denoted as $\overline{s}$, that can be processed at any time. Thus, for a user-specified $\overline{s}$, the total number of increments, denoted as $\kappa$, required to process all the grid points $s_{\max}$, can then be computed as:
		\begin{align} \label{eq:kappa_gen}
		\kappa = \frac{s_{max}-\overline{s}}{\overline{s}-\delta} + 1,
		\end{align}
		where $\delta$ is the number of points carried over to $\mathcal{R}_k$ from $\mathcal{R}_{k-1}$. In Section \ref{sec:result}, we will remark on the choice of $\delta$. 
\end{definition}

\vspace{-0.05cm}
Given a user specified $\overline{s}$ (i.e., when memory resources are scarce), 
one 
way to obtain a sufficiently tight affine abstraction 
is by incrementally obtaining over-approximations over smaller subregions of the domain $\mathcal{X} \times \mathcal{U}$ of $f(x,u)$ over $\kappa$ total increments. The final abstraction can then be obtained combining the incremental results to get the abstraction over the entire domain of $f(x,u)$. With this in mind, we now define the notion of incremental abstraction at increment $k$: 

\vspace{-0.1cm}
\begin{definition}[Incremental Abstraction] \label{def:incr_abstr}
	At each increment $k$, for a function $f(x,u)$ as defined in \eqref{eq:nonl_sys} with an operating region $\mathcal{R}_k$, the incremental abstraction is the affine abstraction of $f(x,u)$ over the operating region $\mathcal{R}_k$. The resulting affine hyperplanes that over-approximate $f(x,u) \, \forall (x,u) \in \mathcal{S}_k$ are denoted as $\mathcal{F}_k = \{\overline{f}_k(x,u), \underline{f}_k(x,u)\}$.
\end{definition}\vspace{-0.05cm}

Moreover, the abstraction error at each increment as well as the overall abstraction error is defined as follows: 

\vspace{-0.1cm}
\begin{definition}[Incremental Abstraction Error]
	At each increment $k$, the abstraction error of $\mathcal{F}_k$ is $\theta_k = \max_{(x,u) \in \mathcal{V}_k} ||\overline{f}_k(x,u) - \underline{f}_k(x,u)||_1$. The overall abstraction error after all $\kappa$ increments is then 
	$\theta = \max(\{\theta_i\}_{i=1}^{\kappa})$.
\end{definition}\vspace{-0.05cm}

Using the concept of incremental abstraction, the problem of affine abstraction of the system in 
\eqref{eq:nonl_sys} can be recast as: 
 
 \vspace{-0.1cm}
\begin{problem}[Affine Abstraction of a \yongn{High-Dimensional} System] \label{problem2}
	Given a 
	\yongn{high-dimensional} nonlinear function in \eqref{eq:nonl_sys}, along with the requirement that at most $\overline{s}$ samples can be taken into consideration at each increment, find the affine abstraction $\mathcal{F}$ of $f$ over $\mathcal{X} \times \mathcal{U}$ using $\{\mathcal{F}_k\}, \, \forall k \in \{1, \ldots, \kappa\}$ obtained from incremental abstractions over $\kappa$ increments, each with at most $\overline{s}$ samples, such that:
	\begin{equation} \label{eq:problem2}
	\begin{aligned}
		& \text{minimize: } \theta_k \\
		& \text{s.t.: } \overline{f}_k(x,u) \geq f(x,u) \geq \underline{f}_k(x,u), \forall (x,u) \in \mathcal{R}_k ,
			\end{aligned}
	\end{equation}
 $\forall k \in \{1, \ldots, \kappa\}$,  and $\mathcal{R}_k$ is expanding from $\mathcal{R}_{0} = \emptyset$ to $\mathcal{R}_{\kappa} = \mathcal{M}$, 
 i.e., $\emptyset =\mathcal{R}_0 \subset \mathcal{R}_1 \subset \hdots \subset \mathcal{R}_\kappa$. Then, using these incremental abstractions, find an affine abstraction over the entire domain $\mathcal{X} \times \mathcal{U} = Conv(\mathcal{R}_\kappa) =Conv(\mathcal{M})$.

\end{problem}\vspace{-0.05cm}

\yongn{Note that throughout this paper, we consider affine abstraction models with only a single region. The results in this paper also applies in a straightforward manner when the total domain $\mathcal{X} \times \mathcal{U}$ is partitioned into $p$ subdomains, as was done in the literature, e.g., \cite{Alimguzhin2017,Singh2018,singh2018mesh}, to further decrease abstraction errors, resulting in  piecewise affine abstractions.}

\vspace{-0.1cm}
\section{Main Results} \label{sec:result}\vspace{-0.05cm}
{To overcome the limitations on space complexity, we propose an incremental abstraction approach, in which at each increment, at most $\overline{s}$ number of sample points are processed to obtain an affine abstraction.}

%
%
%
\begin{lemma} \label{lem:optm}
Given \moha{the affine abstraction model} 
$\mathcal{F}_{k-1}=\{ \underline{f}_{k-1}(x,u), \overline{f}_{k+1} (x,u) \}$ \moha{for the nonlinear function $f(x,u)$} 
over an operating region $\mathcal{R}_{k-1}$, 
at increment $k$, solving the following minimization problem over the sample set $\mathcal{S}_k = (\mathcal{R}_k \setminus \mathcal{R}_{k-1}) \cup \mathcal{V}_{k-1}$, where $\mathcal{V}_{k-1} \moha{\triangleq} Ver(Conv(\mathcal{R}_{k-1}))$, obtains a functional over-approximation of $f(x,u)$ over $\mathcal{R}_k$:
\begin{subequations}
	\begin{align*} \label{eq:opt_prob}
		\underset{\theta_k, \overline{A}_k, \underline{A}_k, \overline{B}_k, \underline{B}_k, \overline{h}_{k},\underline{h}_{k}}{\text{min}} \theta_k \tag{4}
	\end{align*}
	subject to: \\
	$\forall (x,u) \in \mathcal{R}_k \setminus \mathcal{R}_{k-1}:$
	\begin{align} \label{eq:samp_constr}
	\begin{aligned}
		&\overline{A}_k\, {x} + \overline{B}_k \, {u} + \overline{h}_{k} \geq f({x},{u}), \\
		&\underline{A}_k\, {x} + \underline{B}_k \, {u} + \underline{h}_{k} \leq f({x},{u}),
	\end{aligned}
	\end{align}
	$\forall (x,u) \in \mathcal{V}_{k-1}: $
	\begin{align} \label{eq:extension_constr}
	&\begin{aligned}
		& \overline{A}_k\, {x} + \overline{B}_k \, {u}+ \overline{h}_{k} \geq \overline{A}_{k-1}\, {x} + \overline{B}_{k-1} \, {u} + \overline{h}_{k-1},  \\
		& \underline{A}_k\, {x} + \underline{B}_k \, {u} + \underline{h}_{k} \leq \underline{A}_{k-1}\, {x} + \underline{B}_{k-1} \, {u} + \underline{h}_{k-1},
	\end{aligned}
	\end{align} 
	$\forall (x,u) \in \mathcal{V}_k =Ver(Conv(\mathcal{S}_{k})):$
	\begin{align} \label{eq:error_constr}
	\begin{aligned}
		& (\overline{A}_k - \underline{A}_k) \, x + (\overline{B}_k - \underline{B}_k) \, u + \overline{h}_{k} - \underline{h}_{k} \leq \theta_k \mathds{1}_n.
	\end{aligned}
	\end{align}
\end{subequations}
\end{lemma}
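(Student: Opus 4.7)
The plan is to prove the abstraction property on $\mathcal{R}_k$ by splitting $\mathcal{R}_k$ into the newly added points $\mathcal{R}_k \setminus \mathcal{R}_{k-1}$ and the inherited region $\mathcal{R}_{k-1}$, and handling each part with a different mechanism. Treating this inductively, I would assume the statement holds for $\mathcal{F}_{k-1}$ over $\mathcal{R}_{k-1}$ (with the base case $\mathcal{R}_0 = \emptyset$ being vacuous) and show it then holds for $\mathcal{F}_k$ over $\mathcal{R}_k$.

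For any $(x,u) \in \mathcal{R}_k \setminus \mathcal{R}_{k-1}$, I would observe that such a point lies in $\mathcal{S}_k$ by construction, so constraint \eqref{eq:samp_constr} immediately gives $\underline{f}_k(x,u) \leq f(x,u) \leq \overline{f}_k(x,u)$. This part is direct and poses no difficulty.

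The main step is extending the bracketing property to all of $\mathcal{R}_{k-1}$, where $f$ is not sampled and only the previous abstraction is available as information. My plan is to use constraint \eqref{eq:extension_constr}, which enforces $\overline{f}_k \geq \overline{f}_{k-1}$ and $\underline{f}_k \leq \underline{f}_{k-1}$ only on the vertex set $\mathcal{V}_{k-1}$, and lift this to the entire set $\mathcal{R}_{k-1}$ using convexity. Since $\mathcal{R}_{k-1} \subseteq Conv(\mathcal{R}_{k-1}) = Conv(\mathcal{V}_{k-1})$ (by Definition \ref{def:bndry_pts}), any $(x,u) \in \mathcal{R}_{k-1}$ admits a representation $(x,u) = \sum_i \lambda_i (x_i,u_i)$ as a convex combination of vertices $(x_i,u_i) \in \mathcal{V}_{k-1}$. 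The key observation is that the difference $\overline{f}_k - \overline{f}_{k-1}$ is itself affine in $(x,u)$, so
\begin{align*}
\overline{f}_k(x,u) - \overline{f}_{k-1}(x,u) = \sum_i \lambda_i \bigl[\overline{f}_k(x_i,u_i) - \overline{f}_{k-1}(x_i,u_i)\bigr] \geq 0,
\end{align*}
since each summand is nonnegative by \eqref{eq:extension_constr} and $\lambda_i \geq 0$. Chaining with the inductive hypothesis $\overline{f}_{k-1}(x,u) \geq f(x,u)$ on $\mathcal{R}_{k-1}$ yields $\overline{f}_k(x,u) \geq f(x,u)$. The lower bound follows by the symmetric argument.

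Combining the two cases establishes the bracketing over $\mathcal{R}_k = (\mathcal{R}_k \setminus \mathcal{R}_{k-1}) \cup \mathcal{R}_{k-1}$. The associated incremental abstraction error $\theta_k$ is then directly controlled by constraint \eqref{eq:error_constr} imposed on $\mathcal{V}_k$, which, combined with the affinity of $\overline{f}_k - \underline{f}_k$ and a convex-combination argument analogous to the one above, bounds the gap on the entire convex hull. I expect the main conceptual obstacle to be articulating clearly that a finite set of linear inequalities at vertices suffices to control affine functions on the full convex hull; this is the mechanism that makes the linear program tractable yet sound as an over-approximation.
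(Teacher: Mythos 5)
Your proposal is correct and follows essentially the same route as the paper: bracket the new points via \eqref{eq:samp_constr}, lift \eqref{eq:extension_constr} from the vertex set $\mathcal{V}_{k-1}$ to all of $\mathcal{R}_{k-1}$, chain with the given abstraction $\mathcal{F}_{k-1}$, and let \eqref{eq:error_constr} control $\theta_k$. The only difference is presentational --- you spell out the convex-combination/affinity argument that the paper delegates to a citation of \cite[Lemma 1]{singh2018mesh}, and you phrase the hypothesis on $\mathcal{F}_{k-1}$ as an inductive hypothesis rather than as the lemma's given assumption.
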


\begin{proof}
In the optimization problem given in \eqref{eq:opt_prob}, the constraints \eqref{eq:samp_constr} and \eqref{eq:extension_constr} make sure that the two hyperplanes at increment $k$ bracket the nonlinear function for all newly added grid points and the vertices of operating region $\mathcal{R}_{k-1}$, respectively. 
Moreover, in light of \cite[Lemma 1]{singh2018mesh}, it is obtained from \eqref{eq:extension_constr} that $\forall (x,u) \in \mathcal{R}_{k-1}$,
	\begin{align} 
	\begin{aligned} \label{eq:abstraction_bb} 
		 \overline{f}_{k}(x,u)  \geq \overline{f}_{k-1}(x,u),  \; \underline{f}_{k}(x,u)  \leq \underline{f}_{k-1}(x,u).
	\end{aligned}
	\end{align}
	Since the given two affine hyperplanes $\mathcal{F}_{k-1}=\{ \underline{f}_{k-1}(x,u), \overline{f}_{k-1} (x,u) \}$ over-approximate the nonlinear function over operating region $\mathcal{R}_{k-1}$, i.e, $\underline{f}_{k-1}(x,u) \le f(x,u) \le \overline{f}_{k-1}(x,u)$, $\forall (x,u) \in \mathcal{R}_{k-1}$, we further have
	\begin{align} \label{eq:abstraction_bbb}
	\underline{f}_{k}(x,u) \le f(x,u) \le \overline{f}_{k}(x,u), \quad \forall (x,u) \in \mathcal{R}_{k-1}.	
	\end{align}
As a result, it follows from \eqref{eq:samp_constr}  and \eqref{eq:abstraction_bbb} that 
	\begin{align} 
	\underline{f}_{k}(x,u) \leq f(x,u) \leq \overline{f}_{k}(x,u),  \quad \forall (x,u) \in \mathcal{R}_{k},\label{eq:k+1_final}  
	\end{align}
which implies that the affine hyperplanes obtained at increment $k$ over-approximate the nonlinear function $f(x,u)$ overall the current operating region $\mathcal{R}_k$.

Finally, the constraint in \eqref{eq:error_constr} ensures that the two affine hyperplanes obtained at the increment $k$ are as close to each other as possible, \yongn{i.e., the abstraction error is minimized}.
\end{proof}\vspace{-0.15cm}


Using the above lemma, \yongn{we prove in the following theorem that incremental affine abstraction also yields an affine abstraction model of the system in \eqref{eq:nonl_sys}, solving Problem \ref{problem2}.}
\begin{theorem} \moha{\label{thm:incabs}}
	Consider the nonlinear system \eqref{eq:nonl_sys} with $(x,u) \in \mathcal{X} \times \mathcal{U}$. 
	Let $\overline{s}$ indicate the maximum number of sample points allowed to be taken at each iteration $k$. Algorithm \ref{algorithm:incabs} incrementally solves the abstraction problem formulated in Problem \ref{problem2}, i.e., $\forall (x,u) \in \mathcal{X}\times\mathcal{U}$, it returns upper and lower affine functions $\overline{f}(x,u)\yongn{=\overline{f}_k(x,u)+\sigma \mathds{1}}$ and $\underline{f}(x,u)\yongn{=\underline{f}_k(x,u)-\sigma \mathds{1}}$ that over-approximate the nonlinear system \eqref{eq:nonl_sys}, \yongn{with the corresponding interpolation error $\sigma$ in Lemma \ref{lemma_accuracy} and $\mathds{1}$ is a vector of ones}. 
\end{theorem}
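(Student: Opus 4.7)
The plan is to prove the theorem in two logically distinct stages: first, an inductive argument on the increment index $k$ to show that the iterates produced by Algorithm~1 maintain the over-approximation property at all \emph{grid points} that have been processed so far, and second, a ``lift'' from over-approximation on the discrete grid $\mathcal{M}$ to over-approximation on the entire continuous domain $\mathcal{X}\times\mathcal{U}=\mathrm{Conv}(\mathcal{M})$ via the interpolation error bound $\sigma$ provided by Lemma~\ref{lemma_accuracy}.

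For the inductive stage, I would set up the induction hypothesis as: $\underline{f}_k(x,u)\le f(x,u)\le \overline{f}_k(x,u)$ for all $(x,u)\in\mathcal{R}_k$. The base case $k=1$ follows directly from the constraints \eqref{eq:samp_constr} of the linear program solved at the first increment, since $\mathcal{R}_0=\emptyset$ makes \eqref{eq:extension_constr} vacuous. For the inductive step, I would invoke Lemma~\ref{lem:optm} verbatim: given that $\mathcal{F}_{k-1}$ over-approximates $f$ on $\mathcal{R}_{k-1}$, the constraints \eqref{eq:samp_constr} enforce the bracketing on the newly added grid points $\mathcal{R}_k\setminus\mathcal{R}_{k-1}$, while \eqref{eq:extension_constr} together with \cite[Lemma~1]{singh2018mesh} propagates the previous bracketing (via $\overline{f}_k\ge\overline{f}_{k-1}\ge f$ and $\underline{f}_k\le\underline{f}_{k-1}\le f$) to all points of $\mathcal{R}_{k-1}$, yielding the over-approximation on $\mathcal{R}_k=\mathcal{R}_{k-1}\cup(\mathcal{R}_k\setminus\mathcal{R}_{k-1})$. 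After $\kappa$ increments, since $\mathcal{R}_\kappa=\mathcal{M}$ by construction, we conclude $\underline{f}_\kappa(x,u)\le f(x,u)\le\overline{f}_\kappa(x,u)$ for every grid point.

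The second stage is where the $\sigma\mathds{1}$ correction enters. I would argue as follows: pick an arbitrary $(x,u)\in\mathcal{X}\times\mathcal{U}$ and locate it inside some mesh element $S\subseteq\mathcal{M}$ with vertices $\{v_i\}$. Writing $(x,u)=\sum_i\alpha_i v_i$ as a convex combination with $\alpha_i\ge 0$, $\sum_i\alpha_i=1$, the affinity of $\overline{f}_\kappa$ gives $\overline{f}_\kappa(x,u)=\sum_i\alpha_i\overline{f}_\kappa(v_i)\ge\sum_i\alpha_i f(v_i)=f_l(x,u)$, where $f_l$ is the linear interpolant of $f$ on $S$. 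Lemma~\ref{lemma_accuracy} then bounds $|f(x,u)-f_l(x,u)|\le\sigma$, so $\overline{f}_\kappa(x,u)+\sigma\ge f_l(x,u)+\sigma\ge f(x,u)$ componentwise, and the symmetric argument with reversed inequalities delivers $\underline{f}_\kappa(x,u)-\sigma\le f(x,u)$. Applying this componentwise across the $n$ outputs gives the claimed $\overline{f}=\overline{f}_\kappa+\sigma\mathds{1}$ and $\underline{f}=\underline{f}_\kappa-\sigma\mathds{1}$.

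The delicate step, and the one I expect to require the most care, is the lift from the grid to the continuum in the second stage. There are two subtleties: (i) a generic point in $\mathcal{X}\times\mathcal{U}$ must be placed inside a specific mesh element whose vertices are all in $\mathcal{M}$, which is what the uniform mesh structure of Definition~\ref{def:grid} guarantees; and (ii) the interpolation bound in Lemma~\ref{lemma_accuracy} is stated for a scalar function on a single mesh element, so I need to argue that $\sigma$ can be chosen uniformly (i.e., as the maximum over all mesh elements and all output components of $f$) so that the same additive correction $\sigma\mathds{1}$ works simultaneously on every element of the mesh. Once these points are carefully stated, the rest of the proof is essentially bookkeeping, and the overall abstraction error bound $\theta=\max_k\theta_k$ plus at most $2\sigma$ follows as an immediate corollary.
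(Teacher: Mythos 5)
Your proposal is correct and follows essentially the same route as the paper: an induction on the increment index $k$ using Lemma~\ref{lem:optm} to establish bracketing on $\mathcal{R}_\kappa=\mathcal{M}$, followed by the $\sigma$-correction from Lemma~\ref{lemma_accuracy} to extend from the grid to all of $\mathcal{X}\times\mathcal{U}$. The only difference is that you spell out the convex-combination/interpolation argument for the second stage explicitly, whereas the paper delegates it to \cite[Lemma 2]{singh2018mesh} together with Lemma~\ref{lemma_accuracy}.
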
\vspace{-0.375cm}


\begin{proof}
	Using mathematical induction, we will prove that Theorem \ref{thm:incabs} solves the Problem \ref{problem2} incrementally. 
	
	In the first increment $k=1$, we have the operating region $\mathcal{R}_1$. Since $\mathcal{R}_0  = \emptyset$, we have $\mathcal{V}_0 = \emptyset$. Therefore, we further have $\mathcal{S}_1 = (\mathcal{R}_1 \setminus \mathcal{R}_{0}) \cup \mathcal{V}_{0} = \mathcal{R}_1$.
	Based on Algorithm \ref{algorithm:incabs}, solving the optimization problem defined in Lemma \ref{lem:optm} over $\mathcal{S}_1$ will yield the affine hyperplanes $\mathcal{F}_1 = \{\underline{f}_1(x,u),\overline{f}_1(x,u)\}$ with:
	\begin{align}\vspace{-0.05cm}
		\nonumber \underline{f}_1(x,u) = \underline{A}_1 x + \underline{B}_1 u + \underline{h}_1, \;
		\overline{f}_1(x,u) = \overline{A}_1 x+ \overline{B}_1 u + \overline{h}_1.
	\end{align}
	Since $\mathcal{S}_1 =\mathcal{R}_1$, these two hyperplanes also bracket the function $f(x)$ at all sample points in $\mathcal{R}_1$, i.e.
	\begin{align*}\vspace{-0.05cm}
		\underline{f}_1 (x,u) \le f(x,u) \le \overline{f}_1(x,u), \quad \forall (x,u) \in \mathcal{R}_1.
	\end{align*}
	
	
	At increment $k>1$, suppose that the obtained affine hyperplanes $\mathcal{F}_k = \{ \underline{f}_k(x,u), \overline{f}_k(x,u) \}$ over $(x,u) \in \mathcal{S}_k = (\mathcal{R}_k \setminus \mathcal{R}_{k-1}) \cup \mathcal{V}_{k-1}$ satisfy:
	\begin{align*} 
	\underline{f}_k(x,u) \le f(x,u) \le \overline{f}_k(x,u), \quad \forall (x,u) \in \mathcal{R}_k.
	\end{align*}

Then, follow the same lines in the proof of Lemma \ref{lem:optm} for increment $k+1$, we have
	\begin{align*} 
	\underline{f}_{k+1}(x,u) \leq f(x,u) \leq \overline{f}_{k+1}(x,u),  \ \forall (x,u) \in \mathcal{R}_{k+1}.
	\end{align*}
	
%
	
 	Therefore, the affine hyperplane obtained at any future increment will also over-approximate the nonlinear function over all the past operating regions, hence at the last increment $k=\kappa$, the final two affine hyperplanes $\mathcal{F}_{\kappa} = \{\underline{f}_{\kappa}(x,u), \overline{f}_{\kappa}(x,u) \}$ will over-approximate the nonlinear function over the entire mesh since the operating region $\mathcal{R}_{\kappa} =Conv(\mathcal{S}_{\kappa}) \cap \mathcal{M} = \mathcal{M}$ contains all $s_{max}$ samples. Finally, using \yongn{a combination of} the result in \cite[Lemma 2]{singh2018mesh} \yongn{and Lemma \ref{lemma_accuracy}}, the \yongn{desired} affine abstraction \yongn{can be obtained by accounting for the interpolation errors when extending from grid points of the mesh to the entire continuous domain (cf. step 5 of Algorithm \ref{algorithm:incabs})}.
	This completes the proof. 
%
\end{proof}\vspace{-0.2cm}


\begin{algorithm}[t]
\caption{Procedures of Incremental Abstraction}  \label{algorithm:incabs}
 \begin{enumerate}
	\item Initialize $k=1$, $\mathcal{R}_0 = \emptyset \implies \mathcal{V}_0 = \emptyset$.
	\item At increment $k$, consider a new sample set $\mathcal{S}_k = (\mathcal{R}_k \setminus \mathcal{R}_{k-1}) \cup \mathcal{V}_{k-1}$ of size $ \overline{s} $, where the set $(\mathcal{R}_k \setminus \mathcal{R}_{k-1}) \neq \emptyset$ denotes the newly added grid points such that $\mathcal{R}_k$ is expanding with $k$.
	\item For the sample set $\mathcal{S}_k$, use Lemma \ref{lem:optm} to obtain hyperplanes $\mathcal{F}_k = \{\overline{f}_k,\underline{f}_k\}$ that over-approximate the nonlinear function \eqref{eq:nonl_sys} over $\mathcal{S}_k$.
	\item Go to step 2 with $k=k+1$ if $k < \kappa$.
	\item After obtaining the final hyperplanes $\mathcal{F}_{\kappa} = \{\underline{f}_{\kappa}(x,u), \overline{f}_{\kappa}(x,u) \}$, the affine abstraction over the domain $\mathcal{X}\times \mathcal{U}$ for the system \eqref{eq:nonl_sys} is: 
	\begin{align*} 
	\begin{aligned}
	\overline{f}(x,u) = \overline{A}_{\kappa} x +  \overline{B}_{\kappa} u + \overline{h}_{\kappa} + \sigma, \\
	\underline{f}(x,u) = \underline{A}_{\kappa} x +  \underline{B}_{\kappa} u + \underline{h}_{\kappa} -\sigma,
	\end{aligned}
	\end{align*}
	where $\sigma$ is the approximation error 
	in \shen{Lemma \ref{lemma_accuracy}}.
	\end{enumerate}
\end{algorithm}

To reduce space complexity, the proposed incremental abstraction \yongn{algorithm} only computes affine hyperplanes for $\overline{s}$ sample points at each increment $k$. As shown in step 2 of the Algorithm \ref{algorithm:incabs}, at each increment $k$, we consider a new sample set $\mathcal{S}_k = (\mathcal{R}_k \setminus \mathcal{R}_{k-1}) \cup \mathcal{V}_{k-1}$ of size $ \overline{s} $ and discard the previously processed points from the set $\mathcal{R}_{k-1} \setminus \mathcal{V}_{k-1}$ to accommodate new points. Then, in Lemma \ref{lem:optm}, we show that retaining these $\overline{s}$ grid points at each increment $k$ is enough to provide conservative over-approximation over all other discarded points at $k-1$.

	Bounds on the total number of increments $\kappa$ of the incremental abstraction can be calculated if $\overline{s}$ is given. For a  state-input domain $\mathcal{X} \times \mathcal{U} \subset \mathbb{R}^{n+m}$, in general at least $n+m+1$ grid points are required to define a hyperplane. Moreover, since we require the operating region to expand with each increment, so $\delta$, the maximum number of points that can be carried over future increments cannot exceed $\overline{s}-1$. Therefore, $\delta$ is bounded by $\delta \in \left[n+m+1,\overline{s}-1\right]$. Hence, using \eqref{eq:kappa_gen}, the following bounds on $\kappa$ apply:
	\begin{align*}
		\kappa \in \left[\frac{s_{max}-\overline{s}}{\overline{s}-(n+m+1)}+1,s_{max}-\overline{s}+1\right].
	\end{align*}
\vspace{-0.1cm}
\section{Examples and Discussion} \label{sec:Examples}\vspace{-0.05cm}
In this section, \yongn{we demonstrate} the capability of the proposed incremental abstraction approach in the limited resource setting \yongn{using two high-dimensional nonlinear systems.}

\yongn{\vspace{-0.1cm}
\subsection{Nonlinear Rastrigin's function \cite{pohlheim2005geatbx}}\vspace{-0.05cm}

\begin{figure*}[!t]
	\centering
	\begin{subfigure}[t]{0.31\textwidth}
		\centering
		\includegraphics[scale=0.12,trim=18mm 3mm 0mm 0mm]{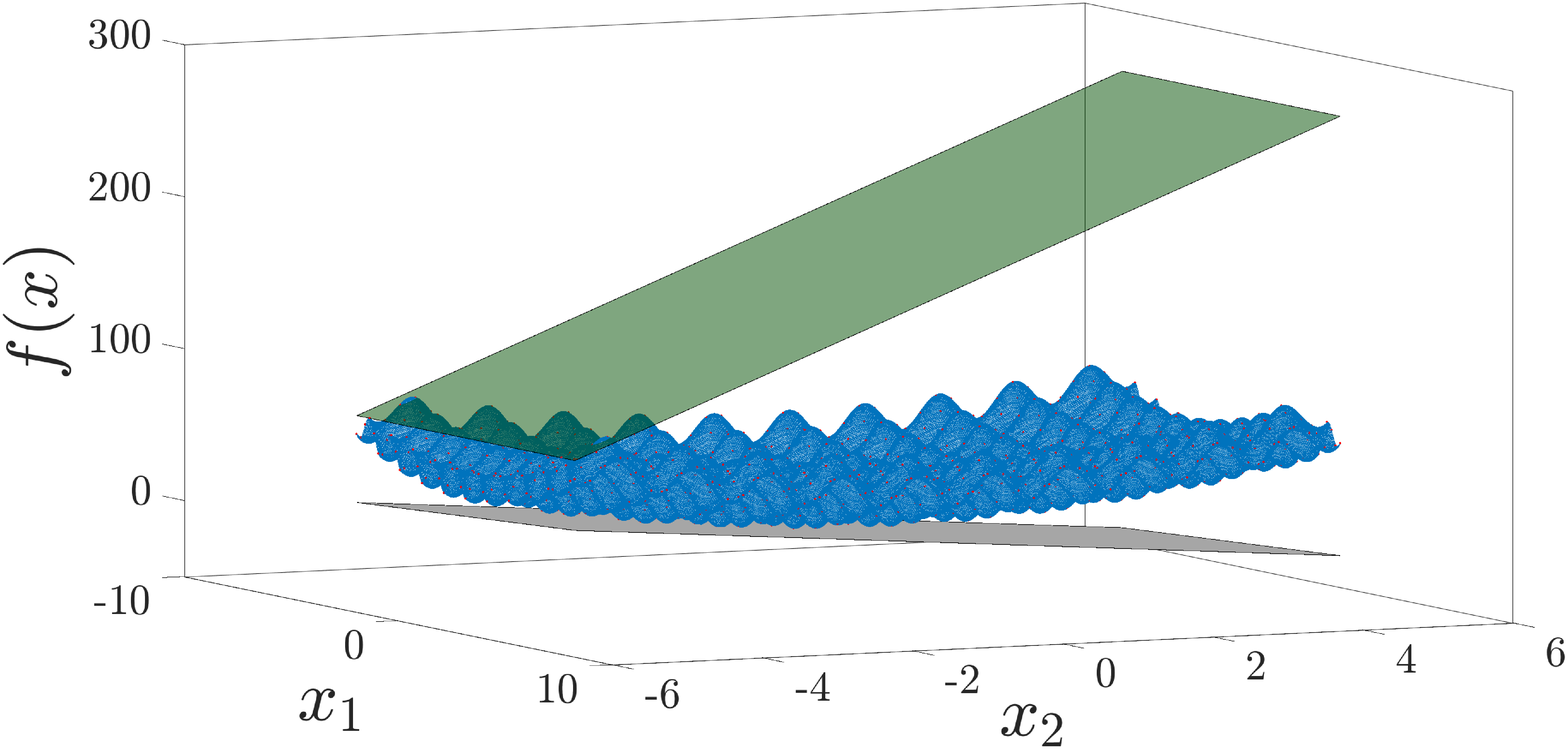}
		\caption{Abstraction with $\overline{s}=50$}
	\end{subfigure}~~
	\begin{subfigure}[t]{0.31\textwidth}
		\centering
		\includegraphics[scale=0.12,trim=12mm 3mm 0mm 0mm,clip]{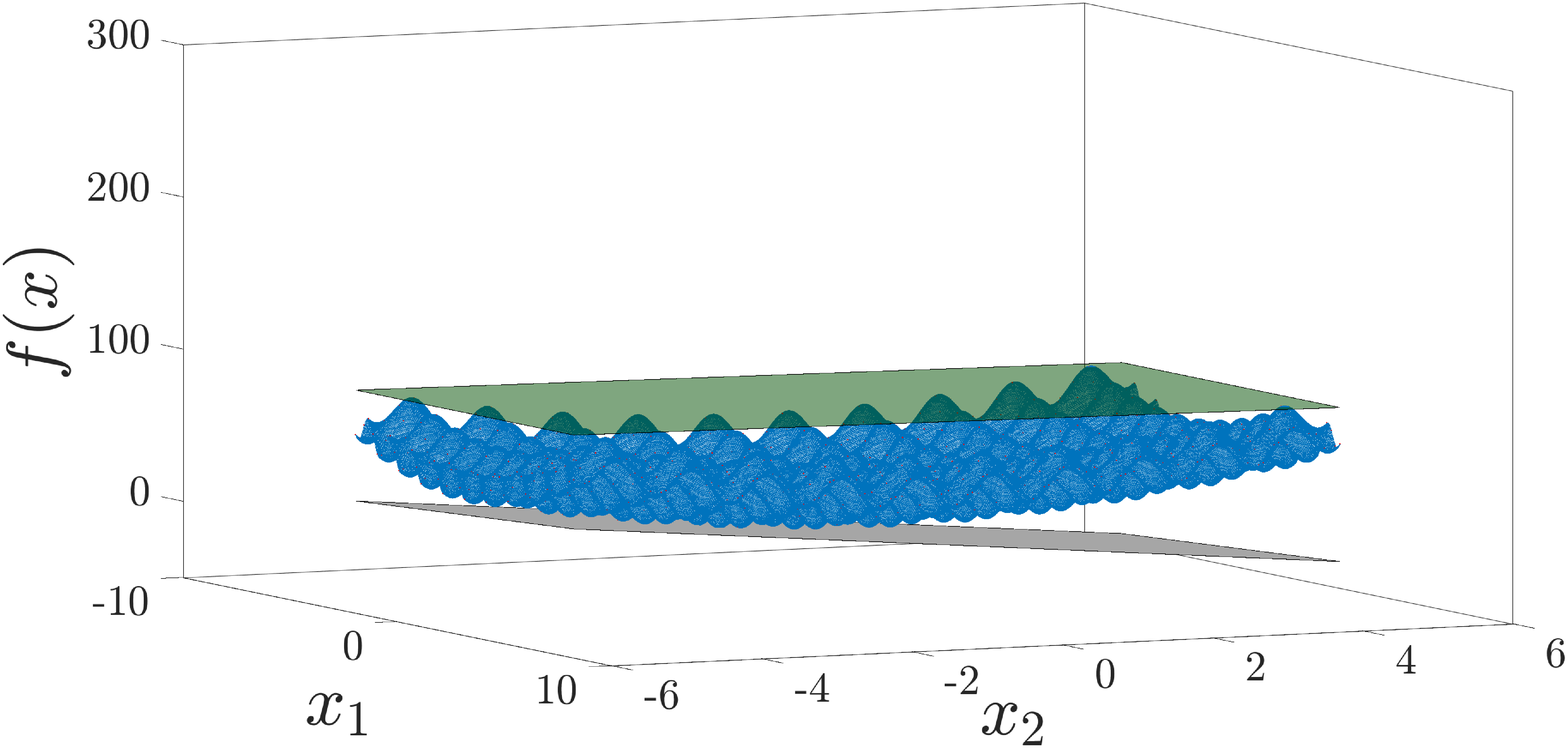}
		\caption{Abstraction with $\overline{s}=500$}
	\end{subfigure}~~
	\begin{subfigure}[t]{0.31\textwidth}
		\centering
		\includegraphics[scale=0.12,trim=15mm 3mm 10mm 0mm,clip]{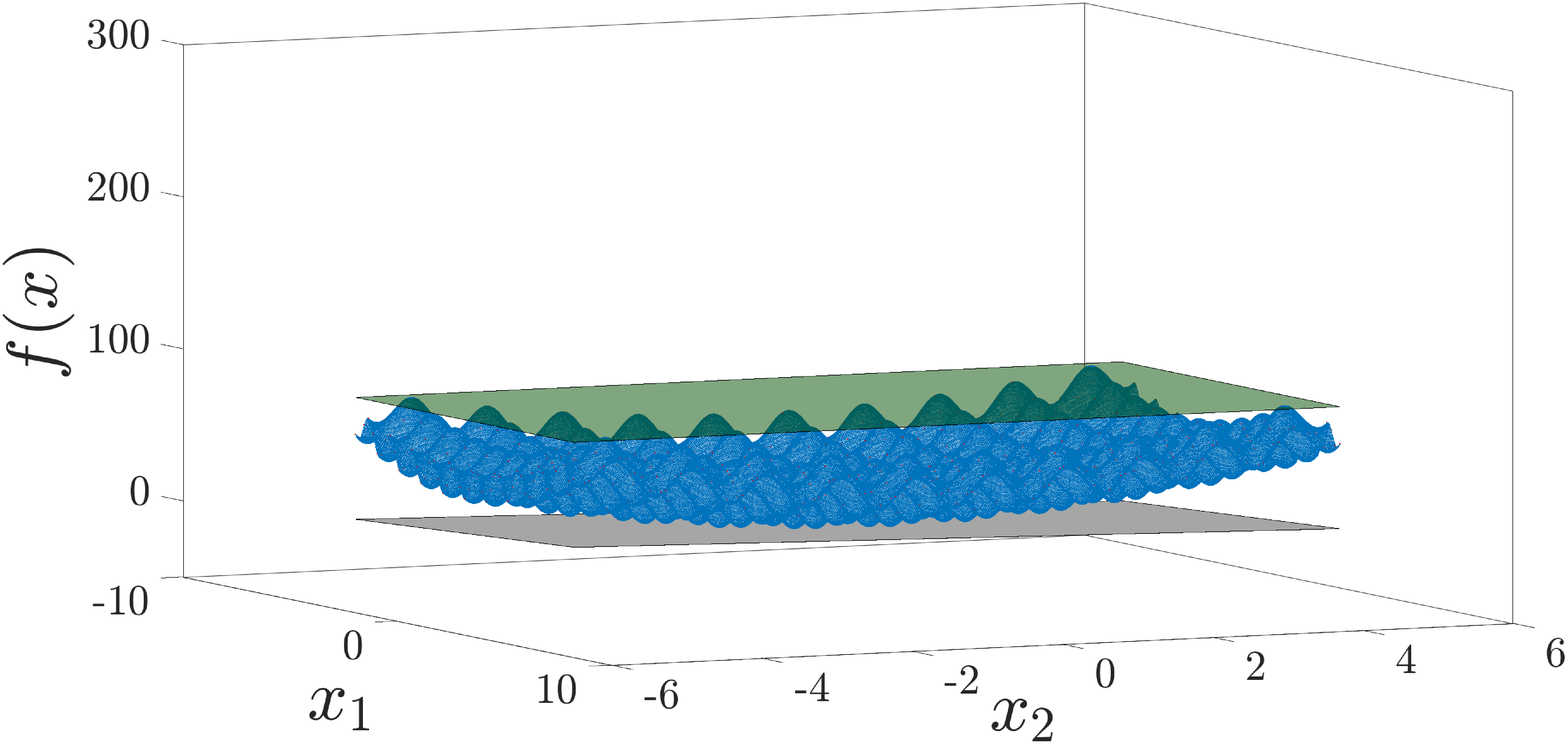}
		\caption{Abstraction with all points (same as \cite{singh2018mesh})}
	\end{subfigure}\vspace{-0.1cm}
	\caption{\yongn{Comparison of abstractions for varying maximum numbers of grid points $\bar{s}$ 
	(memory allocation)} of $(\ref{eq:rastrigin})$ with $d=2$.\vspace*{-0.5cm}\label{fig:Comp1}}
\end{figure*}
\begin{figure*}
	\centering
	\begin{subfigure}[t]{0.32\textwidth}
		\centering
		\includegraphics[scale=0.27,trim=10mm 11mm 0mm 0mm]{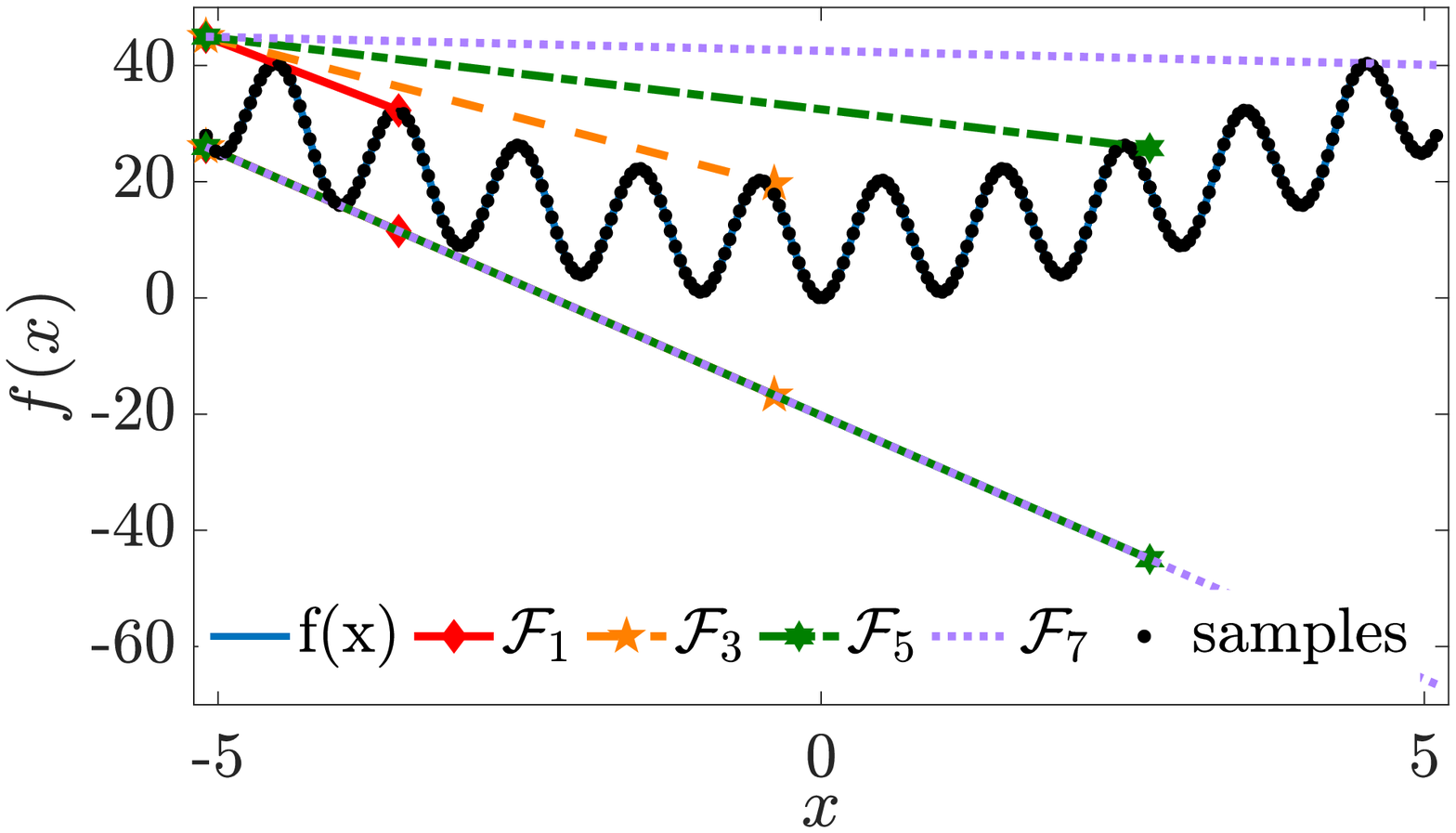}
		\caption{
		With $\mathcal{R}_1$ to the left}\label{fig:Base_IA}
	\end{subfigure}~
	\begin{subfigure}[t]{0.32\textwidth}
		\centering
		\includegraphics[scale=0.27,trim=10mm 11mm 0mm 0mm]{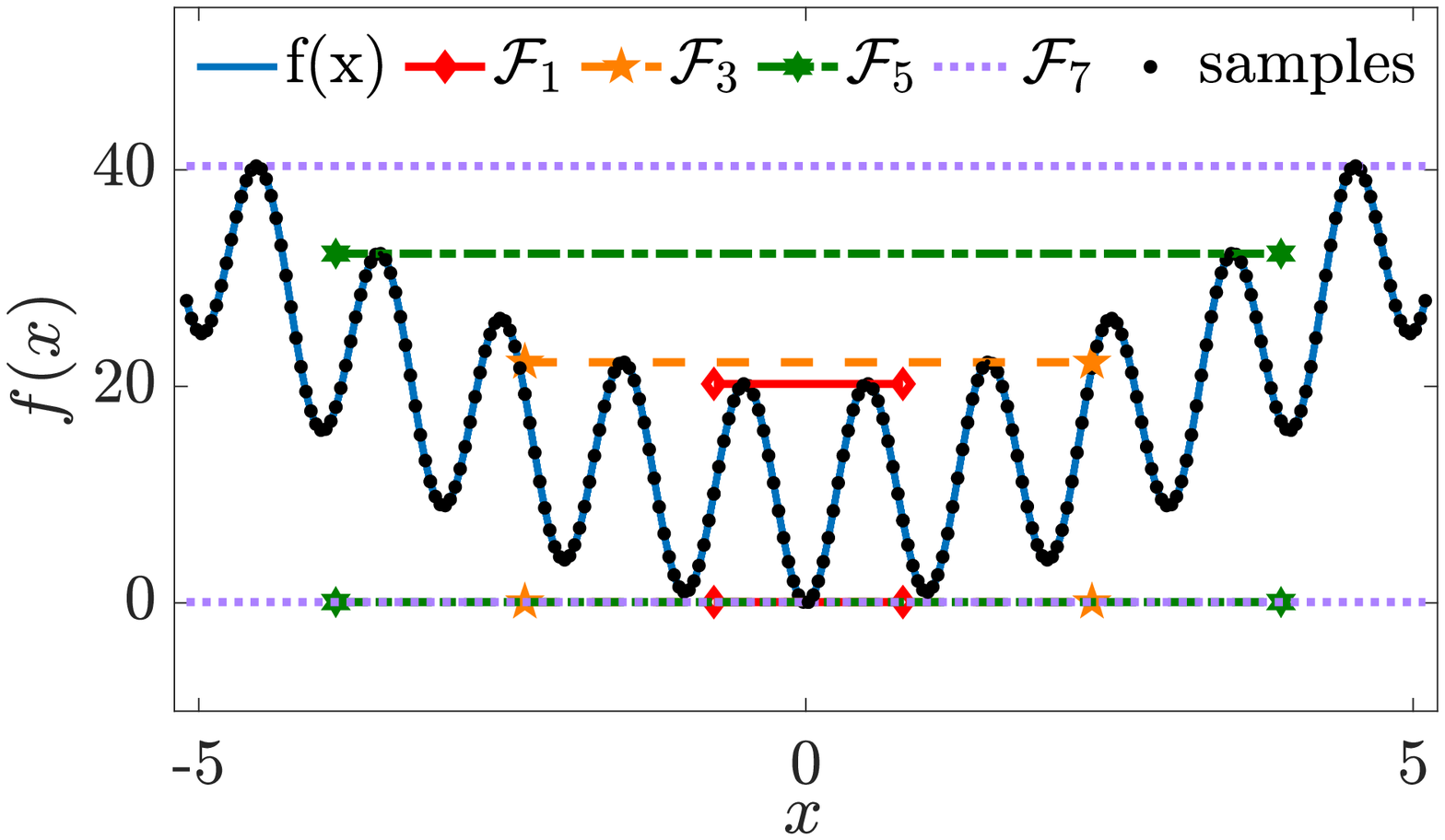}
		\caption{
		With $\mathcal{R}_1$ at the center}\label{fig:Centered_IA}
	\end{subfigure}~
	\begin{subfigure}[t]{0.32\textwidth}
		\centering
		\includegraphics[scale=0.27,trim=10mm 11mm 0mm 0mm]{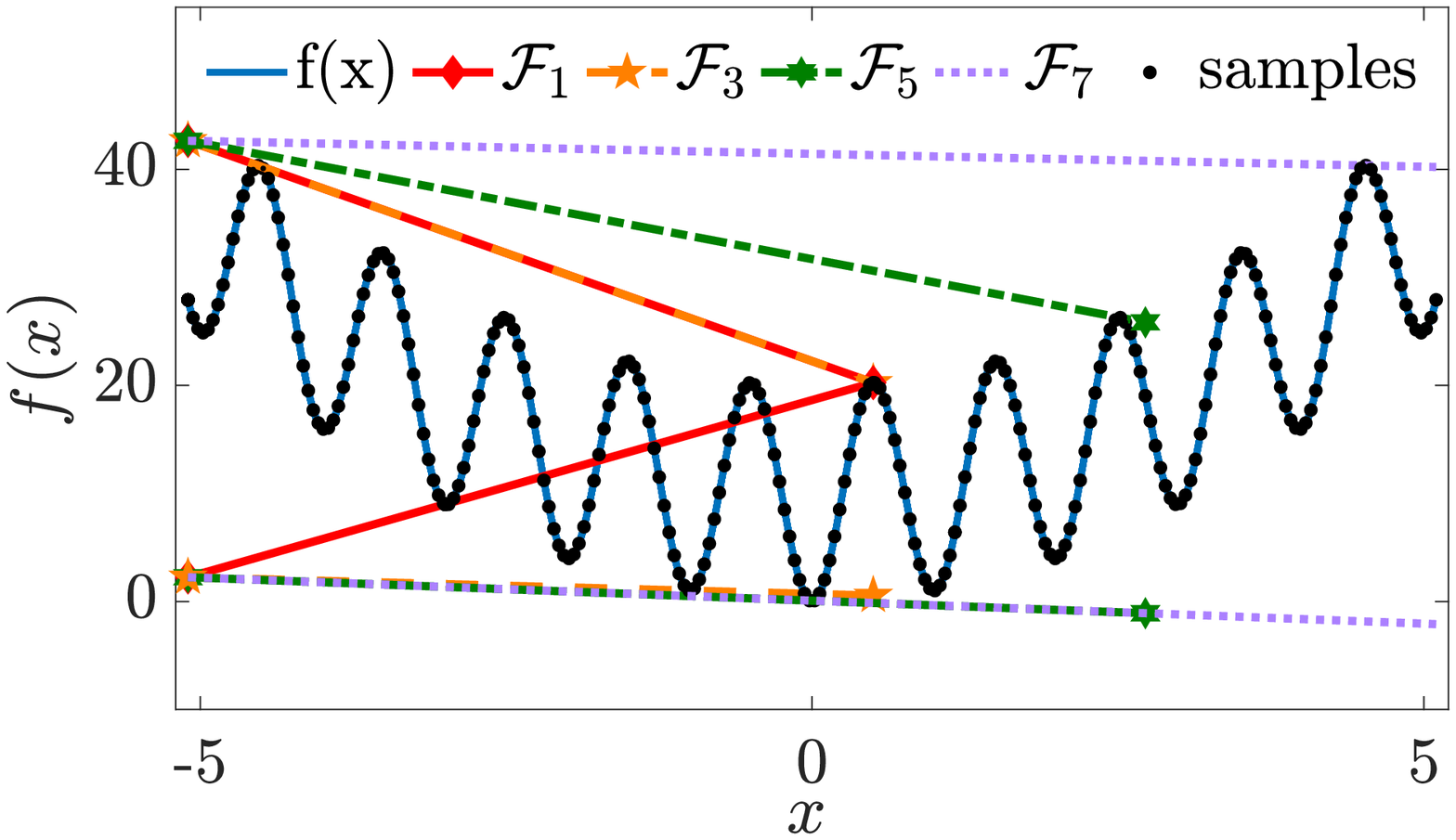}
		\caption{
		With grid point $x = 0.5$ as warm-start}\label{fig:WS_IA}
	\end{subfigure}\vspace{-0.1cm}
	\caption{\yongn{Comparison of affine abstractions of $(\ref{eq:rastrigin})$ with $d=1$ for different heuristics}. The hyperplanes in $\mathcal{F}_k$ for $k=1,3,5,7$ show the evolution of the abstraction after respective increments. The lengths of each $\mathcal{F}_k$ vary as the domain varies.\vspace*{-0.75cm}\label{fig:Comp2}}
\end{figure*}

First, we consider a nonlinear system with dynamics described by Rastrigin's function \cite{pohlheim2005geatbx}:} 
\begin{align} \label{eq:rastrigin}
\dot{x}_i= f(x) = 10d + \textstyle\sum_{j=1}^{d}[x_j^2 - 10\cos(2\pi x_j)]
\end{align}
where $x= [x_1, \ldots, x_d]^T \in \mathbb{R}^{d}$ with $d$ being the dimension of state $x$. In addition, we also assume that $x_i \in [-5.1, 5.1]$ for all $i\in\{1, \ldots, d\}$. 
All simulations are performed on Arizona State University's Agave Cluster on a single thread of one of the cores of Intel Xeon E5-2680 v4 CPU processor running at 2.40GHz. The script is written and run on MATLAB$\textsuperscript{\textregistered}$ version 2017a, \yongn{and uses Gurobi \cite{gurobi} as the linear program solver.}  
The amount of RAM available for the simulations 
is also adjusted to cater to the required environment for the 
sake of a fair comparison. Moreover, for incremental abstraction, the maximum number of grid points $\overline{s}$ that are considered in each linear program is a controllable parameter, which  we 
also vary 
for comparison. 

\paragraph{\yongn{Effects of sample size on abstraction error performance with unlimited memory}} \yongn{For our first study, we emulate} a virtually unlimited resource environment 
by setting the maximum available system RAM to 64GB, and use 
the function \eqref{eq:rastrigin} 
with a 2-dimensional domain. 
In each dimension, we consider 51 points, resulting in a total of $51^2=2601$ grid points. The computational times are compared for different cases of maximum number of grid points that can be \yongn{considered for each linear program}. In the first case, $\overline{s} = 50$ is chosen, which takes 57 increments to find the over-approximation of the 2-dimensional nonlinear system. For the second case, $\overline{s} = 500$ solves the problem in 6 increments. Finally, the last case considers all the points at once, as in \cite{singh2018mesh}, to solve the problem. Figure \ref{fig:Comp1} depicts the 
resulting lower and upper 
affine hyperplanes as well as the original nonlinear function under these three cases. \yongn{In all cases, the nonlinear system is over-approximated by the affine hyperplanes obtained from the proposed abstraction method.} Table \ref{tab:Comp1} shows the computational times for each case and the corresponding maximum distances between the 
hyperplanes, \yongn{which demonstrates that the proposed incremental abstraction is suboptimal when compared to 1-step abstraction approaches in \cite{singh2018mesh,Alimguzhin2017} and its performance in terms of abstraction error and total time is dependent on the amount of allocated memory in terms of $\bar{s}$. Therefore, taking $\overline{s}$ as a controllable parameter, the proposed abstraction method allows the users to decide on the trade-off between computational time, computational resources required to solve higher-dimensional nonlinear function abstractions and the tightness of the resulting abstraction.}

\begin{table}[t]
	\caption{Effects of Sample Size on Performance}\vspace{-0.15cm}
	\label{tab:Comp1}
	\begin{center}
		\begin{tabular}{|c|cc|c|c|}
			\hline
			\multicolumn{1}{c}{} & &    \\[\dimexpr-\normalbaselineskip-\arrayrulewidth]
			Performance  & \multicolumn{2}{c|}{Incremental}  & 1-Step & 1-Step \\
			 Parameter & \multicolumn{2}{c|}{Abstraction}  & Abstraction \cite{singh2018mesh}  & Abstraction \cite{Alimguzhin2017}\\
			\hline
			$\overline{s}$ 		& \multicolumn{1}{c|}{50} 	& \multicolumn{1}{c|}{500} 	& All Points 	& All Points\\
			\hline
			Time Taken (sec) 	& \multicolumn{1}{c|}{15} 	& \multicolumn{1}{c|}{6.21} 	& 0.334 & 0.348		\\
			\hline
			max($\theta$) 		& \multicolumn{1}{c|}{300.4}	& \multicolumn{1}{c|}{112.4}	& 80.23 & 84.19		\\
			\hline
		\end{tabular}
	\end{center} \vspace{-0.175cm}
\end{table}

\paragraph{\yongn{Effects of sample size on abstraction error performance with limited memory}} Next, we consider the limited memory case by 
setting the maximum available system RAM to 500MB. 
Here, in each dimension, 5 grid points are chosen, so, depending on the dimension $d$ of the domain, the total number of points will be $5^d$. For incremental abstraction, the maximum number of grid points to take in each increment is set to be $\bar{s}=10^5$ points. Under these resources limitation, the comparison between incremental abstraction and the 1-step abstraction in \cite{singh2018mesh} is summarized in Table \ref{tab:Comp2}. \yongn{We observed that with incremental abstraction, abstractions of higher dimensional nonlinear systems using only limited resources can be achieved with more time (which, as above-mentioned, is less of a concern because the linear programs are solved offline), whereas the 1-step abstraction methods in \cite{singh2018mesh,Alimguzhin2017} return an error and cannot compute any abstraction for $d\ge 8$. Further, the results suggest that given more time, even higher dimensional abstraction problems than are depicted in Table \ref{tab:Comp2} can be solved by computers with limited memory.} 


\begin{table}[t]
	\caption{Performance Under Limited Resources}
	\label{tab:Comp2}
	\begin{center} \vspace{-0.15cm}
		\begin{tabular}{|c|cc|cc|}
			\hline
			\multirow{2}{*}{Dimension} & \multicolumn{2}{c|}{Time Taken (sec.)} & \multicolumn{2}{c|}{Separation} \\\cline{2-5}
			
			 & \multicolumn{1}{c|}{Incremental} & 1-Step & \multicolumn{1}{c|}{Incremental} & 1-Step \\
			 \hline
			 1 &	\multicolumn{1}{c|}{2.091} &	2.179 &	\multicolumn{1}{c|}{55.8} &	55.8 \\
			 \hline
			 3 &	\multicolumn{1}{c|}{2.216} &	2.145 &	\multicolumn{1}{c|}{167.5} &	167.5 \\
			 \hline
			 5 &	\multicolumn{1}{c|}{2.26} &	2.189 &	\multicolumn{1}{c|}{279.2} &	279.2 \\
			 \hline
			 7 &	\multicolumn{1}{c|}{4.927} &	4.393 &	\multicolumn{1}{c|}{390.9} &	390.9 \\
			 \hline
			 9 &	\multicolumn{1}{c|}{69.286}	 & N/A &	\multicolumn{1}{c|}{867.1} &	N/A \\
			 \hline
			 11 & \multicolumn{1}{c|}{2329.178} &	N/A &	\multicolumn{1}{c|}{1659.2} &	N/A \\
			 \hline
			 12 &	\multicolumn{1}{c|}{10095.77} &	N/A &	\multicolumn{1}{c|}{1637.8} &	N/A \\
			 \hline
		\end{tabular}
	\end{center}\vspace{-0.15cm}
\end{table}


\paragraph{\yongn{Effects of heuristics  on abstraction error performance with limited memory}}
\yongn{Additionally, we observed that heuristics can improve the performance of our incremental abstraction in terms of decreased abstraction error.} To better visualize the effects of the heuristics, 
we consider the example with \eqref{eq:rastrigin} in 1D. The example has $s_{max} = 250$  grid points and the maximum number of points $\overline{s}$ is set to 40. 


From our analysis, two major reasons are associated with increased suboptimality of the incremental procedure: (i) conservative approximations due to constraints in \eqref{eq:extension_constr} for guaranteeing future abstractions, and (ii) when using expanding operating region, we will 
start from a closely located cluster of samples, the abstraction of which, for very small $\overline{s}$, may have higher slope than the Lipschitz constant of the system in \eqref{eq:nonl_sys}. 
\yongn{Thus, we conjecture that} one of the ways to tackle the first issue is by choosing the starting region $\mathcal{R}_1$ {smartly}. In Figures \ref{fig:Base_IA}--\ref{fig:Centered_IA}, we show the effects of selecting different starting points on the final abstraction for \eqref{eq:rastrigin} in 1D. By choosing the starting region at the center of the domain $\mathcal{X}$, the overall abstraction is less conservative than the one obtained when the starting region is on one end of the domain as in Figure \ref{fig:Base_IA}. 
Further, \yongn{we conjecture that the second issue} can be resolved by picking sample points that are more spread-out in the domain as a warm-start for the incremental abstraction. This will prevent the closely clustered region to be formed in $\mathcal{R}_1$. In Figure \ref{fig:WS_IA}, providing a random grid point at $x=0.5$ as a warm-start also results in better abstraction than the one obtained without any warm-starts. Instead of 
random samples, certain properties of the nonlinear function $f(x,u)$ 
also can be used for warm-starting, 
e.g., 
global minima or global maxima of $f(x,u)$. 

\yongn{\vspace{-0.05cm}
\subsection{Rendezvous of a Robot Swarm}\vspace{-0.05cm}

			

			

\begin{table}[t]
	\caption{Performance of Abstraction of Swarm Dynamics}
	\label{tab:Comp3}
	\begin{center} \vspace{-0.15cm}
		\begin{tabular}{|c|c|cc|cc|}
			\hline
			\multirow{2}{*}{Agents} & \multirow{2}{*}{State} & \multicolumn{2}{c|}{Time Taken (sec.)} & \multicolumn{2}{c|}{Separation} \\\cline{3-6}
			
			& & \multicolumn{1}{c|}{Incremental} & 1-Step & \multicolumn{1}{c|}{Incremental} & 1-Step \\
			\hline
			\multirow{3}{*}{3} & $f^{\mathbf{x}}_i(x)$ &	\multicolumn{1}{c|}{5.05} &	5.5 &	\multicolumn{1}{c|}{0.1118} &	0.1118 \\
			\cline{2-6} & $f^{\mathbf{y}}_i(x)$ & \multicolumn{1}{c|}{4.73} & \multicolumn{1}{c|}{4.66} & \multicolumn{1}{c|}{0.8798} & \multicolumn{1}{c|}{0.8798}\\
			\cline{2-6} & $f^{\theta}_i(x)$ & \multicolumn{1}{c|}{6.81} & \multicolumn{1}{c|}{5.24} & \multicolumn{1}{c|}{2.9157} & \multicolumn{1}{c|}{2.9157}\\ \hline
			\multirow{3}{*}{5} & $f^{\mathbf{x}}_i(x)$ &	\multicolumn{1}{c|}{1909.85} &	N/A &	\multicolumn{1}{c|}{0.1397} &	N/A \\
			\cline{2-6} & $f^{\mathbf{y}}_i(x)$ & \multicolumn{1}{c|}{1780.72} & \multicolumn{1}{c|}{N/A} & \multicolumn{1}{c|}{1.2437} & \multicolumn{1}{c|}{N/A}\\
			\cline{2-6} & $f^{\theta}_i(x)$ & \multicolumn{1}{c|}{2004.61} & \multicolumn{1}{c|}{N/A} & \multicolumn{1}{c|}{25.5508} & \multicolumn{1}{c|}{N/A}\\
			\hline
		\end{tabular}
	\end{center}\vspace{-0.15cm}
\end{table}

We consider the dynamics of a swarm of robots described in \cite{nagavalli2017automated}, in the form of \eqref{eq:nonl_sys}, with the following parameters: $n=3N$, where $N$ is the number of agents/robots, $m=0$ and $x=\begin{bmatrix} x^\top_1 & \dots & x^\top_N \end{bmatrix}^\top \in \mathbb{R}^n $, where $x$ is the augmented state of the whole swarm, consisting of $x_i$'s, which is the state vector of the agent/robot $i$. Moreover, $x_i =\begin{bmatrix} \mathbf{x}_i & \mathbf{y}_i & \mathbf{\theta}_i \end{bmatrix}^\top \in \mathbb{R}^3$, where $\mathbf{x}_i$, $\mathbf{y}_i$ and $\mathbf{\theta}_i$ are the robot $i$'s $x$-coordinate, $y$-coordinate and heading angle, respectively. Similarly,  $f=\begin{bmatrix} f^\top_1 & \dots & f^\top_N \end{bmatrix}^\top$, where $\forall i \in \{1\dots N\}$, $f_i(.)$ describes the dynamics of robot $i$ as follows: $f_i(.)=\begin{bmatrix} f^{\mathbf{x}}_i(.) & f^{\mathbf{y}}_i(.) & f^{\mathbf{\theta}}_i(.) \end{bmatrix}^\top : \mathbb{R}^n \to \mathbb{R}^3$, with 
$f^{\mathbf{x}}_i(x)=\mathbf{u}^i_v \cos(\mathbf{\theta}_i)$, 
$f^{\mathbf{y}}_i(x)=\mathbf{u}^i_v \sin(\mathbf{\theta}_i)$, 
$f^{\mathbf{\theta}}_i(x)=\mathbf{u}^i_w$, 
where $\mathbf{u}^i_v=b^{i\top}\dot{p}^i$ and $\mathbf{u}^i_w =\phi(b^i,\dot{p}^i)$ \shen{are control inputs forcing each robot to move towards each other}, 
$b^i=\begin{bmatrix} \cos \mathbf{\theta}_i & \sin \mathbf{\theta}_i \end{bmatrix}^\top$ is the ``bearing" vector for the robot $i$, $\dot{p}^i=\frac{1}{\mathcal{N}(i)} \sum_{j \in \mathcal{N}(i)} (p^j-p^i)$, $p^i=\begin{bmatrix} \mathbf{x}_i &\mathbf{y}_i  \end{bmatrix}^\top$ is the ``position" vector of the robot $i$, the function $\phi(v_1,v_2)=\text{sgn}((v_1 \times v_2)^\top \hat{e}_z) \cos ^{-1} (\frac{v^\top_1v_2}{\|v_1\|_2\|v_2\|_2})$ finds the smallest angle required to rotate from vector $v_1$ to vector $v_2$, $\forall v_1,v_2 \in \mathbb{R}^2$ and $\mathcal{N}_i$ is the set of agents in the neighborhood of agent $i$. 
}

\shen{
In this simulation, we consider swarms with $N=3$ and $N=5$ robots\footnote[1]{The states are bounded as $x_1\in [-5,5]$, $x_2 \in [-5,5]$, $x_3 \in [-7,7]$, $x_4 \in [-7,7]$, $x_5 \in [-7,7]$, $y_1\in [0,0.4]$, $y_2 \in [0.5,0.9]$, $y_3 \in [1,5]$, $y_4 \in [0,0.876]$, $y_5 \in [0,1.67]$ and $\theta_i \in [-0.02, 0.02], \forall i \in \{1, \ldots, 5\}$.}, which correspond to \hasn{7-dimensional and 12-dimensional nonlinear systems, respectively.} 
The maximum available system RAM is set to be 500MB.
As shown in Table \ref{tab:Comp3}, both the proposed incremental abstraction and the the 1-step abstraction in \cite{singh2018mesh} can obtain comparable results in terms of computational time and abstraction error for the swarm with 3 robots. However, for more complex swarm with 5 robots, the 1-step abstraction \cite{singh2018mesh} is not able to generate an affine abstraction due to the limited memories, while the proposed incremental approach can still compute it. 
}

\vspace{-0.1cm}
\section{Conclusions} \vspace{-0.05cm}
In this paper, an incremental affine abstraction approach is proposed to simplify a class of nonlinear systems as affine systems, in the sense that two affine hyperplanes are updated dynamically to envelop the nonlinear systems with expanding operating regions. Initially, we consider a small operating region and solve a linear programming to obtain two affine hyperplanes that locally over-approximate the nonlinear system. Then, expanding the operating region with new grid points incrementally, we can find the corresponding affine hyperplanes for a larger domain until the entire domain is covered. The proposed incremental abstraction approach has the capability of reducing the computational space complexity, especially when the nonlinear system has high dimensions. Simulation results are provided to demonstrate the effectiveness of the proposed abstraction method. \shen{Future work will include  
the comparison of the proposed incremental abstraction approach with symbolic approaches in the context of reachability analysis and control synthesis.} 

\vspace{-0.cm}
\bibliographystyle{IEEEtran}
\bibliography{biblio}

\end{document}